\newtheorem{theorem}{Theorem}[section]
\newtheorem{lemma}[theorem]{Lemma}
\newtheorem{corollary}[theorem]{Corollary}
\begin{document}

\title{\textbf{Extreme coefficients of multiplicity Tutte polynomials}}
\author{
       \small Xian'an Jin, Tianlong Ma\footnote{Corresponding author},\ \ Weiling Yang\\ [0.2cm]
       \small School of Mathematical Sciences\\[-0.8ex]
    \small Xiamen University\\[-0.8ex]
    \small P. R. China\\
    \small E-mails: \tt xajin@xmu.edu.cn, tianlongma@aliyun.com, ywlxmu@163.com\\
    }
\date{}
\maketitle
\begin{abstract}
The multiplicity Tutte polynomial, which includes the arithmetic Tutte polynomial, is a generalization of the classical Tutte polynomial of matroids. In this paper, we obtain an expression of the general coefficient and the expressions of six extreme coefficients of multiplicity Tutte polynomials.
In particular, an expression of the general coefficient and the expressions of corresponding extreme coefficients of classical Tutte polynomial of matroids are deduced. \\
\skip0.2cm
\noindent \textbf{Keywords:} Tutte polynomial; multiplicity Tutte polynomial;  matroid; coefficient\\
\noindent {\bf AMS subject classification 2020:} 05C31,  05B35
\end{abstract}

\section{Introduction}
\noindent

A matroid can be defined in many different but equivalent ways, for our convenience, we prefer defining it through the rank function.
We refer to \cite{Oxley} for undefined notions on matroids in this paper.

A \textit{matroid} $M=(X, rk)$ is a set $X$ together with a \emph{rank function}, denoted by $rk$, from the family of all subsets of $X$ to the non-negative integers, that is, $rk: 2^{X}\rightarrow \mathbb{Z}_{\geq 0}$ satisfying the following three axioms:
\begin{description}
	\item[(1)] If $A\subseteq X$, then $rk(A)\leq |A|$.
	\item[(2)] If $A,B\subseteq X$ and $A\subseteq B$, then $rk(A)\leq rk(B)$.
	\item[(3)] If $A,B\subseteq X$, then $rk(A\cup B)+rk(A\cap B)\leq rk(A)+ rk(B)$.
\end{description}
In particular, it is worth noting that the first axiom implies $rk(\emptyset)=0$. The rank $rk(M)$ of the matroid $M=(X, rk)$ is defined by $rk(X)$, and for short let $r=rk(M)$. For a matroid $M=(X, rk)$, its dual $M^{*}$ is the matroid $(X,rk^{*})$, where $rk^{*}(A)=|A|+rk(X\setminus A)-r$ for $A\subseteq X$.

The Tutte polynomial of a matroid $M=(X, rk)$, an important invariant for matroids, is defined by
\[T_{M}(x,y)=\sum_{A\subseteq X}(x-1)^{r-rk(A)}(y-1)^{|A|-rk(A)}.\]

We call $\mathcal{M}=(X,rk,m)$ a \textit{multiplicity matroid} if $(X,rk)$ is a matroid and $m$ is a function (called multiplicity) from the family of all subsets of $X$ to the positive integers, that is, $m: 2^{X}\rightarrow \mathbb{Z}_{> 0}$.
We say that  $m$ is the \textit{trivial multiplicity} if it is identically equal to $1$.

The \emph{multiplicity Tutte polynomial} of a multiplicity matroid $\mathcal{M}=(X,rk,m)$, introduced by Moci in \cite{Moci}, is defined by
\[\mathfrak{M}_{\mathcal{M}}(x,y)=\sum_{A\subseteq X}m(A)(x-1)^{r-rk(A)}(y-1)^{|A|-rk(A)}.\]

A multiplicity matroid $\mathcal{M}=(X,rk,m)$ is called an \textit{arithmetic matroid}, introduced by D'Adderio and Moci in \cite{D'AdderioMoci}, if $m$ satisfies the following axioms:
\begin{description}
	\item[(1)] For all $A\subseteq X$ and $e \in X$, if $rk(A\cup \{e\})=rk(A)$, then $m(A\cup \{e\})$ divides $m(A)$; otherwise, $m(A)$ divides $m(A\cup \{e\})$.
	\item[(2)] If $A\subseteq B \subseteq X$ and $B$ is a disjoint union $B=A\cup F\cup T$ such that for all $A\subseteq C\subseteq B$ we have $rk(C)=rk(A)+|C\cap  F|$, then
	$m(A)\cdot m(B)=m(A\cup F)\cdot m(A\cup T).$
	\item[(3)]  If $A\subseteq B \subseteq X$ and $rk(A)=rk(B)$, then
	\[\sum_{A\subseteq T\subseteq B}(-1)^{|T|-|A|}m(T)\geq 0.\]
	\item[(4)]  If $A\subseteq B \subseteq X$ and $rk^{*}(A)=rk^{*}(B)$, then
	\[\sum_{A\subseteq T\subseteq B}(-1)^{|T|-|A|}m(X\setminus T)\geq 0.\]
\end{description}

The dual of a multiplicity matroid $\mathcal{M}=(X,rk,m)$ is defined by $$\mathcal{M}^{*}=(X,rk^{*},m^{*}),$$
where $M^{*}=(X,rk^{*})$ is the dual of the matroid $(X,rk)$, and for any $A\subseteq X$, we set $m^{*}(A)=m(X\setminus A)$. D'Adderio and Moci \cite{D'AdderioMoci} proved that the dual of an arithmetic matroid is also arithmetic.

If a multiplicity matroid $\mathcal{M}$ is an arithmetic matroid, then the multiplicity Tutte polynomial of $\mathcal{M}$ is called the \textit{arithmetic Tutte polynomial} of $\mathcal{M}$ by D'Adderio and Moci in \cite{D'AdderioMoci}. The arithmetic Tutte polynomial contains much information, which, as far as we know,
includes invariants of toric arrangements \cite{Moci, D'AdderioMoci}, arithmetic colorings and arithmetic flows in graphs \cite{D'Adderio2}, the dimension of the Dahmen-Micchelli space \cite{Moci},  the dimension of the De Concini-Procesi-Vergne space \cite{DeConcini}, the Ehrhart polynomial of the zonotope \cite{D'Adderio1}, the Ehrhart theory of Lawrence polytopes \cite{Stapledon} and so on.

A multiplicity matroid $\mathcal{M}=(X,rk,m)$ with the trivial multiplicity
reduces to a matroid $M=(X,rk)$ and the corresponding multiplicity Tutte polynomial of $\mathcal{M}$ reduces to the Tutte polynomial of $M$. In this paper, we first obtain an expression of the general coefficient and the expressions of six extreme coefficients of multiplicity Tutte polynomials.
Secondly, an expression of the general coefficient and the expressions of corresponding extreme coefficients of classical Tutte polynomial of matroids are deduced.

The rest of this paper is organized as follows. In Section 2, we recall the definition of characteristic polynomial of matroids and list some properties. Moreover, we obtain three extreme coefficients of $T_M(x,0)$ of a matroid $M$. In Section 3, we first give an expression of the general coefficient of multiplicity Tutte polynomials. Then we give
the expressions of six extreme coefficients $b_{r,0}$, $b_{r-1,0}$, $b_{r-2,0}$, $b_{r-1,1}$, $b_{r-2,1}$ and $b_{r-2,2}$ of the multiplicity Tutte polynomial of a multiplicity matroid $(X,rk,m)$. Finally $b_{0,|X|-r}$, $b_{0,|X|-r-1}$, $b_{0,|X|-r-2}$, $b_{1,|X|-r-1}$, $b_{1,|X|-r-2}$ and $b_{2,|X|-r-2}$ are obtained by duality.
In Section 4, for the special case of matroids, we further simplify the corresponding extreme coefficients, and extreme coefficients of the Tutte polynomial of graphs obtained in \cite{Gong} are derived. In the last section, we give several remarks.

\section{Characteristic polynomial and related lemmas}
\noindent

In this section we recall the definition of characteristic polynomial of matroids
and list several related results.

The \textit{characteristic polynomial} $\chi_M(\lambda)$ of a matroid $M=(X,rk)$ is
\[\chi_M(\lambda)=\sum_{A\subseteq X}(-1)^{|A|}\lambda^{r-rk(A)}.\]

Note that if a matroid $M=(X,rk)$ has a loop (i.e. an element $e\in X$ with $rk(e)=0$), then $\chi_M(\lambda)=0$.
It is easy to see that characteristic polynomial is a specialization of the Tutte polynomial.

\begin{lemma}\emph{\cite{Ellis-Monaghan}}\label{chapol1}
	Let $M$ be a matroid. Then
	\[T_M(x,0)=(-1)^{r}\chi_M(1-x).\]
\end{lemma}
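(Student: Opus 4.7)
The plan is a direct algebraic verification: I would unfold both sides of the claimed identity into sums over subsets $A\subseteq X$ and check that the general terms match, exploiting only the parity identity $(-1)^{2rk(M)}=1$.

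First, substituting $y=0$ in the definition of $T_M(x,y)$ gives
\[
T_M(x,0)=\sum_{A\subseteq X}(x-1)^{rk(M)-rk(A)}(-1)^{|A|-rk(A)}.
\]
Second, setting $\lambda=1-x$ in the definition of $\chi_M$ and using $(1-x)^{rk(M)-rk(A)}=(-1)^{rk(M)-rk(A)}(x-1)^{rk(M)-rk(A)}$, then multiplying by $(-1)^{rk(M)}$, I would collect the total sign $(-1)^{rk(M)}\cdot(-1)^{|A|}\cdot(-1)^{rk(M)-rk(A)}=(-1)^{2rk(M)+|A|-rk(A)}=(-1)^{|A|-rk(A)}$. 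The resulting general term is then identical to the one above, so the two sums agree term by term.

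There is essentially no obstacle: the proof is a one-line sign bookkeeping. The loopless hypothesis is not actually needed for the algebraic identity itself. Indeed, if $M$ had a loop $e$, then pairing each $A\not\ni e$ with $A\cup\{e\}$ makes the corresponding terms on each side cancel, so both sides would simply vanish. The hypothesis is there to make the statement nontrivial and to ensure that one can later read off genuine information about $T_M(x,0)$ from the characteristic polynomial, which is the use envisaged in the later sections of the paper.
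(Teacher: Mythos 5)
Your verification is correct: substituting $y=0$ and $\lambda=1-x$ and collecting the sign $(-1)^{rk(M)}(-1)^{|A|}(-1)^{rk(M)-rk(A)}=(-1)^{|A|-rk(A)}$ makes the two subset expansions agree term by term. The paper gives no proof of this lemma (it is quoted from \cite{Ellis-Monaghan}), so there is nothing to compare against; your direct sign bookkeeping is the standard argument. Your side remark is also accurate: with the subset-expansion definition of $\chi_M$ used in this paper, pairing $A$ with $A\cup\{e\}$ for a loop $e$ makes both sides vanish, so the loopless hypothesis is not needed for the identity itself, only for the later results (e.g.\ the flat-lattice expression of $\chi_M$) that build on it.
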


Let $(P, \leq)$ be a finite partially ordered set. The map $P\times P\rightarrow \mathbb{Z}$ is called the\textit{ M\"{o}bius function} of $P$, denoted by $\mu_P$, if it satisfies \[\sum_{x\leq y\leq z}\mu_P(x,y)=\delta(x,z) \text{\quad if \quad} x\leq z\]
together with
\[\mu_P(x,z)=0\text{\quad if \quad} x \nleqslant z,\]
where $\delta(x,z)$ is Kronecker delta function, given by
\[\delta(x,z)=
\begin{cases}
	1, & \text{if } x=z,\\
	0, & \text{otherwise. }
\end{cases}\]
We simply write $\mu$ for $\mu_P$ when $P$ is obvious.

For a matroid $M=(X,rk)$ and $A\subseteq X$, the set $\{e\in X:rk(A\cup \{e\})=rk(A)\}$ is called \textit{closure} of $A$, denoted by $cl_M(A)$. Further, in a matroid $M=(X,rk)$, a subset $F$ of $X$ for which $cl_M(F)=F$ is called a \textit{flat}
or a \textit{closed set} of $M$. For a matroid $M=(X,rk)$ and $C\subseteq X$, $C$ is called a \textit{circuit}
of $M$ if for all $e\in C$, $rk(C\setminus \{e\})=|C|-1=rk(C)$. A flat $F$ of a matroid $M$ is \textit{cyclic} if for any $e\in F$, there exists a circuit $C$ of $M$ such that $e\in C\subseteq F$. Let us denote the set of all flats of a matroid $M$ by $\mathcal{F}(M)$.
It is well known that the set $\mathcal{F}(M)$ of all flats of the matroid $M$, ordered by inclusion, forms a lattice. Set $$\mathcal{F}_i(M)=\{F\in \mathcal{F}(M):rk(F)=i\}$$
and
$$\mathcal{F}'_i(M)=\{F\in \mathcal{F}_i(M): F \text{ is cyclic }\}.$$

The characteristic polynomial of a matroid can be given via  M\"{o}bius function in the following.
\begin{lemma}\emph{\cite{Rota}}\label{chapol2}
	Let $M=(X,rk)$ be a loopless matroid. Then
	\[\chi_M(\lambda)=\sum_{F\in \mathcal{F}(M)}\mu_{\mathcal{F}(M)}(\emptyset,F)\lambda^{r-rk(F)}.\]
\end{lemma}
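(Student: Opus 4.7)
The plan is to prove Lemma 2.2 by partitioning the defining sum for $\chi_M(\lambda)$ according to the closure of each subset and then applying Möbius inversion on the lattice of flats. Since $M$ is loopless, $cl_M(\emptyset)=\emptyset$, so $\emptyset\in\mathcal{F}(M)$ is the unique minimum of the lattice, and the Möbius function $\mu(\emptyset,\cdot)$ on $\mathcal{F}(M)$ is well defined.

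First I would fix a flat $F$ and group together all subsets $A\subseteq X$ with $cl_M(A)=F$. Because $rk(A)=rk(cl_M(A))$ and the closure map sends every subset of $X$ to a flat, we can rewrite
\[\chi_M(\lambda)=\sum_{F\in\mathcal{F}(M)}\lambda^{rk(M)-rk(F)}\,g(F),\qquad g(F):=\sum_{\substack{A\subseteq X\\cl_M(A)=F}}(-1)^{|A|}.\]
The goal is then reduced to showing $g(F)=\mu(\emptyset,F)$ for every flat $F$.

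Next I would relate $g$ to the auxiliary sum $h(F):=\sum_{A\subseteq F}(-1)^{|A|}$. A simple computation gives $h(F)=(1-1)^{|F|}$, which equals $1$ if $F=\emptyset$ and $0$ otherwise. On the other hand, since $F$ is a flat, a subset $A$ lies in $F$ iff $cl_M(A)\subseteq F$, so partitioning by the value of the closure yields
\[h(F)=\sum_{\substack{G\in\mathcal{F}(M)\\ G\leq F}}g(G).\]
By Möbius inversion on the lattice $\mathcal{F}(M)$ we obtain
\[g(F)=\sum_{\substack{G\in\mathcal{F}(M)\\ G\leq F}}\mu(G,F)\,h(G)=\mu(\emptyset,F),\]
using that $h(G)$ vanishes except at $G=\emptyset$. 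Substituting this into the grouped expression for $\chi_M(\lambda)$ completes the proof.

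The only genuinely subtle point, and the one I would be most careful about, is the justification that the map $A\mapsto cl_M(A)$ correctly refines $\{A\subseteq F\}$ into the fibers $\{A:cl_M(A)=G\}$ indexed by flats $G\leq F$; this relies on the standard closure facts $A\subseteq cl_M(A)$, $cl_M(cl_M(A))=cl_M(A)$, and monotonicity, together with the looplessness assumption to ensure $\emptyset$ is a flat so that the Möbius function on $\mathcal{F}(M)$ based at $\emptyset$ makes sense. The rest is routine bookkeeping.
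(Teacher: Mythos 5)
Your argument is correct, but note that the paper does not prove this lemma at all: it is quoted from Rota's 1964 paper as a known result, so there is no in-paper proof to compare against. What you have written is essentially the classical proof (Rota's own argument, reproduced in standard references such as White's book): group the subsets $A\subseteq X$ into fibers of the closure map, use $rk(A)=rk(cl_M(A))$ to pull out $\lambda^{rk(M)-rk(F)}$, observe that for a flat $F$ one has $\{A: A\subseteq F\}=\{A: cl_M(A)\subseteq F\}$ so that $h(F)=\sum_{G\leq F}g(G)$ with $h(F)=(1-1)^{|F|}=\delta(F,\emptyset)$, and invert. Your identification of the role of looplessness (it guarantees $cl_M(\emptyset)=\emptyset$, so $\emptyset$ is the bottom element of $\mathcal{F}(M)$ and $\mu(\emptyset,\cdot)$ is meaningful; indeed, with a loop present the lemma would fail since $\chi_M\equiv 0$) is exactly the right point to be careful about. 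The only step I would spell out more explicitly is the Möbius inversion itself: the paper defines $\mu$ by the recursion $\sum_{x\leq y\leq z}\mu(x,y)=\delta(x,z)$ (sum over the second argument), whereas the inversion you invoke, $g(F)=\sum_{G\leq F}\mu(G,F)h(G)$, needs the dual identity $\sum_{E\leq G\leq F}\mu(G,F)=\delta(E,F)$; these agree because $\mu$ is the two-sided inverse of the zeta function in the incidence algebra of the finite lattice $\mathcal{F}(M)$, a standard fact worth one sentence in a self-contained write-up. With that remark added, your proof is a complete and correct substitute for the citation.
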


The following formula \cite[Proposition 7.1.4]{White}, called \textit{Boolean expansion formula}, is useful for computing the M\"{o}bius function.

\begin{lemma}\emph{\cite{White}}\label{Boolean Expansion Formula}
	Let $M=(X,rk)$ be a matroid. Then, for any $F_1, F_2\in \mathcal{F}(M)$ with $F_1\subseteq F_2$,
	\[\mu(F_1,F_2)=\sum_{\stackrel{F_1\subseteq A \subseteq F_2}{rk(A)=rk(F_2)}}(-1)^{|A|-|F_1|}.\]
\end{lemma}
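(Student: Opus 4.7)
The plan is to verify the formula by showing that the right-hand side satisfies the defining recurrence of the Möbius function on the lattice $\mathcal{F}(M)$, and then invoke uniqueness. First I would introduce the auxiliary function
\[\nu(F_1,F_2)=\sum_{\stackrel{F_1\subseteq A\subseteq F_2}{rk(A)=rk(F_2)}}(-1)^{|A|-|F_1|}\]
for $F_1\subseteq F_2$ in $\mathcal{F}(M)$, and set $\nu(F_1,F_2)=0$ otherwise (which matches the empty-sum convention). Since $\mu$ is characterized by the recurrence $\sum_{F_1\leq F\leq F_2}\mu(F_1,F)=\delta(F_1,F_2)$, it suffices to check the analogous identity for $\nu$.

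Next I would compute
\[\sum_{F\in\mathcal{F}(M),\ F_1\subseteq F\subseteq F_2}\nu(F_1,F)=\sum_{F}\sum_{\stackrel{F_1\subseteq A\subseteq F}{rk(A)=rk(F)}}(-1)^{|A|-|F_1|}\]
by interchanging the order of summation: for a fixed subset $A$ with $F_1\subseteq A\subseteq F_2$, I count the flats $F$ satisfying $A\subseteq F\subseteq F_2$ and $rk(F)=rk(A)$. The key observation is that there is exactly one such $F$, namely $F=cl_M(A)$. Indeed, $cl_M(A)$ is a flat with $rk(cl_M(A))=rk(A)$ containing $A$, and $cl_M(A)\subseteq cl_M(F_2)=F_2$ because $F_2$ is a flat. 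Conversely, any flat $F$ with $A\subseteq F$ satisfies $cl_M(A)\subseteq F$, and the standard fact that two flats with the same rank, one contained in the other, must coincide forces $F=cl_M(A)$.

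After this collapse, the double sum simplifies to
\[\sum_{F_1\subseteq A\subseteq F_2}(-1)^{|A|-|F_1|},\]
which by reindexing $B=A\setminus F_1$ equals $\sum_{B\subseteq F_2\setminus F_1}(-1)^{|B|}$. This evaluates to $1$ when $F_2=F_1$ and to $0$ when $F_2\setminus F_1\neq\emptyset$, i.e., exactly $\delta(F_1,F_2)$. By the uniqueness of the Möbius function, this yields $\nu=\mu$ on $\mathcal{F}(M)$.

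The only subtle point, and the one I expect to be the main obstacle, is the closure-uniqueness step: verifying that $cl_M(A)$ is the unique flat sandwiched between $A$ and $F_2$ of rank $rk(A)$. Once that is in hand, the rest of the argument is a swap of summation followed by the classical alternating-sum identity over a Boolean lattice.
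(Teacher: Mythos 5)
Your argument is correct, and it cannot be compared against an internal proof because the paper does not prove this lemma at all: it is quoted from White's book with only a citation. As a standalone verification your route is the standard one and it goes through: $\nu(F_1,F_2)$ vanishes when $F_1\nsubseteq F_2$, and for $F_1\subseteq F_2$ the interchange of summation together with the collapse of the inner count onto the single flat $F=cl_M(A)$ reduces $\sum_{F_1\subseteq F\subseteq F_2}\nu(F_1,F)$ to the Boolean alternating sum $\sum_{B\subseteq F_2\setminus F_1}(-1)^{|B|}=\delta(F_1,F_2)$, so uniqueness of the solution of the defining recurrence (immediate by induction on the interval) gives $\nu=\mu$. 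The one step you flag as the potential obstacle is in fact a one-line consequence of the definitions used in the paper: if $F'\subseteq F$ are flats with $rk(F')=rk(F)$, then for any $e\in F$ one has $rk(F'\cup\{e\})\leq rk(F)=rk(F')$, hence $e\in cl_M(F')=F'$, so $F'=F$; applied to $cl_M(A)\subseteq F$ this forces $F=cl_M(A)$, and monotonicity of closure gives $cl_M(A)\subseteq cl_M(F_2)=F_2$, so the sandwiched flat exists and is unique. Note also that your argument never uses looplessness, which is consistent with the lemma holding for the lattice of flats of an arbitrary matroid; the hypothesis is harmless here and is used elsewhere in the paper (e.g.\ so that $\emptyset$ is a flat in Lemma \ref{Mfunction}).
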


For a matroid $M=(X,rk)$ and two elements $e,f\in X$, we say that $e$ and $f$ are \textit{parallel} in $M$ if $rk(\{e,f\})=rk(e)=rk(f)=1$. A \textit{parallel class} of $M$ is a maximal subset of $X$ such that its any two distinct elements are parallel and no element is a loop. A parallel class is \textit{non-trivial} if it contains at least two elements; otherwise trivial. Dually, a \textit{series class} of $M$ is defined to be a parallel class of the dual matroid of $M$, and it is non-trivial if it has at least two elements.
Let us use $p(M)$ (resp. $s(M)$) to denote the number of parallel classes
(resp. series classes) of $M$, and use  $p'(M)$ (resp. $s'(M)$) to denote the number of non-trivial parallel classes
(resp. series classes) of $M$.

It is easy to see that all parallel classes of a loopless matroid $M=(X,rk)$ form a partition of $X$, so do all series classes. For a loopless matroid $M$, if $F\in \mathcal{F}_1(M)$, then $F$ is a parallel class of $M$, and if $F\in \mathcal{F}'_1(M)$, then $F$ is a non-trivial parallel class of $M$.

Let $M=(X,rk)$ be a matroid, and let $T\subseteq X$. The matroid on $T$ with the rank function obtained by restricting $rk$ to subsets of $T$, denoted by $M|T$, is called the \textit{restriction} of $M$ to $T$. Sometimes $M|T$ is also called the \textit{deletion} of $X\setminus T$ from $M$. We simply write $p(T)$ (resp. $s(T)$) for $p(M|T)$ (resp. $s(M|T)$).

Although the following lemma may be known and easy to see, for the sake of completeness, we provide a proof.
\begin{lemma}\label{Mfunction}
	Let $M$ be a loopless matroid. Then
	\begin{description}
		\item[(1)] $\mu(\emptyset,\emptyset)=1$;
		\item[(2)] $\mu(\emptyset,F)=-1$ if $F\in \mathcal{F}_1(M)$;
		\item[(3)] $\mu(\emptyset,F)=p(F)-1$ if $F\in \mathcal{F}_2(M)$.
	\end{description}
\end{lemma}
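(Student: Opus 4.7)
The plan is to apply the Boolean expansion formula (Lemma~\ref{Boolean Expansion Formula}) with $F_1=\emptyset$, which yields
\[
\mu(\emptyset,F)=\sum_{\substack{A\subseteq F\\ rk(A)=rk(F)}}(-1)^{|A|},
\]
and then specialize to $rk(F)\in\{0,1,2\}$. Part~(1) is immediate, since only $A=\emptyset$ survives. For Part~(2), with $rk(F)=1$, loopless-ness forces every singleton of $F$ to have rank~$1$, so every nonempty $A\subseteq F$ has rank $1$ as well (since $rk$ is bounded above by $rk(F)=1$). Hence $\mu(\emptyset,F)=\sum_{\emptyset\neq A\subseteq F}(-1)^{|A|}=-1$, using $\sum_{A\subseteq F}(-1)^{|A|}=0$.

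For Part~(3), with $F\in\mathcal{F}_2(M)$, I would isolate the rank-$2$ contribution by subtracting the rank-$\leq 1$ part from the full signed sum $\sum_{A\subseteq F}(-1)^{|A|}=0$. A nonempty $A\subseteq F$ has rank $\leq 1$ precisely when all its elements lie inside a single parallel class of the restriction $M|F$, because loopless-ness makes rank $1$ equivalent to pairwise parallelism. Since the $p(F)$ parallel classes of $M|F$ partition $F$ and each class $P_i$ contributes $\sum_{\emptyset\neq A\subseteq P_i}(-1)^{|A|}=-1$, the rank-$\leq 1$ piece evaluates to $1-p(F)$. Subtracting from $0$ yields $\mu(\emptyset,F)=p(F)-1$.

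The main thing to get right in Part~(3) is the characterization of rank-$1$ subsets of $F$ as nonempty subsets of individual parallel classes of $M|F$; this is where both loopless-ness and $rk(F)=2$ enter simultaneously. A cleaner, essentially equivalent, route is to apply the defining Möbius recursion $\sum_{G\in[\emptyset,F]}\mu(\emptyset,G)=0$ in the interval $[\emptyset,F]\subseteq\mathcal{F}(M)$: its atoms are the rank-$1$ flats of $M|F$, of which there are exactly $p(F)$, and substituting the values already established in Parts~(1) and~(2) gives $\mu(\emptyset,F)=-1-p(F)\cdot(-1)=p(F)-1$ with no further bookkeeping.
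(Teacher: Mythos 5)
Your proposal is correct and follows essentially the same route as the paper: apply the Boolean expansion formula with $F_1=\emptyset$, and for part (3) write the rank-$2$ sum as the full signed sum $\sum_{A\subseteq F}(-1)^{|A|}=0$ minus the rank-$\leq 1$ part, which the parallel classes of $M|F$ show equals $1-p(F)$; you merely spell out the parallel-class bookkeeping that the paper leaves as a "note that." Your closing alternative via the M\"{o}bius recursion on the interval $[\emptyset,F]$ (whose atoms are the $p(F)$ rank-$1$ flats inside $F$) is also valid, but it is an optional shortcut rather than a different proof in substance.
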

\begin{proof}
	By definition of  M\"{o}bius function, (1) and (2) are obvious. We now prove (3). For a flat $F\in \mathcal{F}_2(M)$,  let $P_1, \dots, P_{p}$ be all parallel classes of $M|F$. By Lemma \ref{Boolean Expansion Formula}, we have
	\begin{align*}
		\mu(\emptyset,F)=\sum_{A\subseteq F, rk(A)=2}(-1)^{|A|}=\sum_{A\subseteq F}(-1)^{|A|}-\sum_{A\subseteq F, rk(A)\leq 1}(-1)^{|A|}.
	\end{align*}
	Note that
	$$\sum_{A\subseteq F}(-1)^{|A|}=0$$ and
	\begin{align*}
		\sum_{A\subseteq F, rk(A)\leq 1}(-1)^{|A|}&=\sum_{A\subseteq F, rk(A)=0 }(-1)^{|A|}+\sum_{A\subseteq F, rk(A)=1}(-1)^{|A|}\\
		&=1+\sum_{i=1}^p\sum_{A\subseteq P_i, |A|\geq 1}(-1)^{|A|}\\
		&=1-p.
	\end{align*}
	Thus (3) holds.
\end{proof}

Conveniently, for a polynomial $f(x)$ on the variable $x$, let $[x^{i}]f(x)$ denote the coefficient of $x^{i}$ in $f(x)$. We end this section with the following result, which will be used frequently to prove our main results in the forthcoming section.
\begin{lemma}\label{CoTutte}
	Let $M=(X,rk)$ be a loopless matroid. Then
	\begin{description}
		\item[(1)] $[x^{r}]T_M(x,0)=1$;
		\item[(2)] $[x^{r-1}]T_M(x,0)=p(M)-r$;
		\item[(3)] $[x^{r-2}]T_M(x,0)=\binom{r}{2} -(r-1)p(M)+\sum_{F\in \mathcal{F}_2(M)}(p(F)-1)$.
	\end{description}
\end{lemma}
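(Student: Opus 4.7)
The plan is to reduce everything to the characteristic polynomial via Lemma \ref{chapol1} and then extract coefficients using the flat-sum expression in Lemma \ref{chapol2}. Combining the two, one obtains
\[
T_M(x,0)=(-1)^{rk(M)}\sum_{F\in\mathcal{F}(M)}\mu(\emptyset,F)(1-x)^{rk(M)-rk(F)}.
\]
The key observation is that $(1-x)^{rk(M)-rk(F)}$ has degree $rk(M)-rk(F)$ in $x$, so when chasing the coefficient of $x^{rk(M)-k}$ only flats $F$ with $rk(F)\leq k$ can possibly contribute. Hence for $k=0,1,2$ only flats of rank $0$, $1$, or $2$ are involved, and the M\"obius values on these are exactly the ones supplied by Lemma \ref{Mfunction}.

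Concretely, I would proceed in three steps. First, for part (1), only $F=\emptyset$ contributes; its contribution is $(-1)^{rk(M)}\cdot 1\cdot(-1)^{rk(M)}=1$. Second, for part (2), the rank-$0$ flat contributes $rk(M)(-1)^{rk(M)-1}$ (the coefficient of $x^{rk(M)-1}$ in $(1-x)^{rk(M)}$), while each rank-$1$ flat contributes $-(-1)^{rk(M)-1}$; using $|\mathcal{F}_1(M)|=p(M)$ (since rank-$1$ flats are in bijection with parallel classes), the outer $(-1)^{rk(M)}$ cleans everything up to $p(M)-rk(M)$. Third, for part (3), the contributions come from the binomial coefficients of $x^{rk(M)-2}$ in $(1-x)^{rk(M)}$, $(1-x)^{rk(M)-1}$, and $(1-x)^{rk(M)-2}$; weighting these by $\mu(\emptyset,\emptyset)=1$, $\mu(\emptyset,F)=-1$ for rank-$1$ flats, and $\mu(\emptyset,F)=p(F)-1$ for rank-$2$ flats (Lemma \ref{Mfunction}), and summing over $\mathcal{F}_2(M)$, gives exactly the claimed expression after cancelling the $(-1)^{rk(M)}(-1)^{rk(M)-2}=1$.

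The arithmetic is routine binomial expansion; the only subtle points are (i) justifying $|\mathcal{F}_1(M)|=p(M)$ (a rank-$1$ flat is the closure of any one of its non-loop elements, and two non-loops have equal closure iff they are parallel, so rank-$1$ flats biject with parallel classes), and (ii) controlling the sign bookkeeping of $(-1)^{rk(M)}$ against $(-1)^{rk(M)-k}$ coming from the binomial expansion of $(1-x)^{rk(M)-rk(F)}$. Both are straightforward, so no single step is a serious obstacle; the main care is in making sure the degree-truncation argument correctly excludes higher-rank flats so that Lemma \ref{Mfunction} alone suffices.
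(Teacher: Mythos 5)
Your proposal is correct and follows essentially the same route as the paper: reduce to the characteristic polynomial via Lemma \ref{chapol1}, expand over flats via Lemma \ref{chapol2}, truncate by degree so only flats of rank at most $2$ contribute, and plug in the M\"obius values from Lemma \ref{Mfunction} (the identification $|\mathcal{F}_1(M)|=p(M)$, which you rightly flag, is used implicitly in the paper as well). No gaps.
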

\begin{proof}
	By Lemmas  \ref{chapol1} and \ref{chapol2}, we have
	\[T_{M}(x,0)=(-1)^{r}\chi_{M}(1-x)=(-1)^{r}\sum_{F\in \mathcal{F}(M)}\mu(\emptyset ,F)(1-x)^{r-rk(F)}.\]
	To compute $[x^{i}]T_M(x,0)$ for $r-2\leq i\leq r$, we only need to consider the contribution of each flat $F\in \mathcal{F}(M)$ with $rk(F)\leq 2$ in the following.
	
	For a flat $F\in \mathcal{F}(M)$, $rk(F)=0$ if and only if $F=\emptyset$, since $M$ has no loops. Therefore, by  Lemma \ref{Mfunction}, we have
	\begin{align*}
		[x^{r}]T_M(x,0)&=[x^{r}](-1)^{r}\mu(\emptyset ,\emptyset)(1-x)^{r}\\
		&=[x^{r}](-1)^{r}(1-x)^{r}\\
		&=1.
	\end{align*}
	Thus (1) holds.
	
	Considering flats $F\in \mathcal{F}(M)$ with $rk(F)\leq 1$, we have
	\begin{align*}
		[x^{r-1}]T_M(x,0)=&[x^{r-1}](-1)^{r}\sum_{F\in \mathcal{F}_0(M)}\mu(\emptyset,F)(1-x)^{r}\\
		&+[x^{r-1}](-1)^{r}\sum_{F\in \mathcal{F}_1(M)}\mu(\emptyset,F)(1-x)^{r-1}.
	\end{align*}
	Therefore, by Lemma \ref{Mfunction}, we have
	\begin{align*}
		[x^{r-1}]T_M(x,0)=&[x^{r-1}](-1)^{r}(1-x)^{r}\\
		&+[x^{r-1}](-1)^{r}(1-x)^{r-1}(-p(M))\\
		=&-r +p(M).
	\end{align*}
	Thus (2) is right.
	
	Considering flats $F\in \mathcal{F}(M)$ with $rk(F)\leq 2$, we have
	\begin{align*}
		[x^{r-2}]T_M(x,0)=&[x^{r-2}](-1)^{r}\sum_{F\in \mathcal{F}_0(M)}\mu(\emptyset,F)(1-x)^{r}\\
		&+[x^{r-2}](-1)^{r}\sum_{F\in \mathcal{F}_1(M)}\mu(\emptyset,F)(1-x)^{r-1}\\
		&+[x^{r-2}](-1)^{r}\sum_{F\in \mathcal{F}_2(M)}\mu(\emptyset,F)(1-x)^{r-2}.
	\end{align*}
	Therefore, by Lemma \ref{Mfunction}, we have
	\begin{align*}
		[x^{r-2}]T_M(x,0)=&[x^{r-2}](-1)^{r}(1-x)^{r}\\
		&+[x^{r-2}](-1)^{r}(1-x)^{r-1}(-p(M))\\
		&+[x^{r-2}](-1)^{r}(1-x)^{rk(X)-2}\left(\sum_{F\in \mathcal{F}_2(M)}(p(F)-1)\right)\\
		=&\binom{r}{2} -(r-1)p(M)+\sum_{F\in \mathcal{F}_2(M)}(p(F)-1).
	\end{align*}
	Hence (3) holds.
\end{proof}

\section{Main results }
\noindent

In this section, we consider general coefficients and extreme coefficients of multiplicity Tutte polynomials.

Let $\mathcal{M}=(X,rk,m)$ be a multiplicity matroid. Similarly, for $T\subseteq X$, the multiplicity matroid on $T$ with the rank function and multiplicity obtained by restricting $rk$ and $m$ to subsets of $T$ respectively, denoted by $\mathcal{M}|T$, is called the \textit{restriction} of $\mathcal{M}$ to $T$.

For a matroid $M=(X,rk)$ and $T\subseteq X$, the matroid on $X\setminus T$ with the rank function given by
$$rk_{M/T}(A)=rk(A\cup T)-rk(T)$$ for $A\subseteq X\setminus T$, denoted by $M/T$, is called the \textit{contraction} of $T$ from $M$.

A significant formula of the Tutte polynomial of matroids, called convolution formula, was given by Kook et al. in \cite{Kook}, which, in fact, was implicit by Etienne and Las
Vergnas in \cite{Etienne}. Recently, a similar formula for multiplicity Tutte polynomials was obtained by Backman and Lenz in \cite{Backman}, which plays a crucial role in our forthcoming arguments, and it was further generalized to universal arithmetic Tutte polynomials by Dupont et al. in \cite{DeConcini}. When $\mathcal{M}=(X,rk,m)$ is a multiplicity matroid, we use $M$ to denote the corresponding matroid $(X,rk)$.

\begin{theorem}\emph{\cite{Backman}}\label{confor}
	Let $\mathcal{M}=(X,rk,m)$ be a multiplicity matroid. Then
	\[\mathfrak{M}_{\mathcal{M}}(x,y)=\sum_{A\subseteq X} \mathfrak{M}_{\mathcal{M}|A}(0,y)T_{M/A}(x,0).\]
\end{theorem}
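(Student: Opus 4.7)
The plan is to verify the identity by direct algebraic manipulation, following the same strategy that yields the classical convolution formula of Kook--Reiner--Stanton--Etienne--Las~Vergnas.

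First, I would expand both sides from the definitions. The right-hand side becomes
\[
\sum_{A\subseteq X}\,\sum_{B\subseteq A}\,\sum_{C\subseteq X\setminus A} m(B)\,(-1)^{rk(A)-rk(B)}(y-1)^{|B|-rk(B)}\cdot(x-1)^{rk(X)-rk(A\cup C)}(-1)^{|C|-rk(A\cup C)+rk(A)},
\]
using that the rank function of $\mathcal{M}|A$ is $rk$ restricted to subsets of $A$, that $\mathfrak{M}_{\mathcal{M}|A}(0,y)$ substitutes $x=0$ into the multiplicity Tutte polynomial of $\mathcal{M}|A$, and that $rk_{M/A}(C)=rk(A\cup C)-rk(A)$ with $rk(M/A)=rk(X)-rk(A)$.

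Next I would change variables by setting $D=A\cup C$. Since $C\subseteq X\setminus A$ is disjoint from $A$, the triple $(B,A,C)$ with $B\subseteq A$ and $C\subseteq X\setminus A$ is the same data as a triple $B\subseteq A\subseteq D\subseteq X$ with $C=D\setminus A$. Substituting $|C|=|D|-|A|$ and $rk(A\cup C)=rk(D)$, the sign collects as $(-1)^{2rk(A)-rk(B)+|D|-|A|-rk(D)}=(-1)^{|A|}\cdot(-1)^{rk(B)+|D|+rk(D)}$. Reordering summation so the outer sum is over pairs $B\subseteq D$, one obtains the inner sum
\[
\sum_{B\subseteq A\subseteq D}(-1)^{|A|}=(-1)^{|B|}\sum_{A'\subseteq D\setminus B}(-1)^{|A'|},
\]
which vanishes unless $D=B$, in which case it equals $(-1)^{|B|}$.

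After this collapse only the diagonal $D=B$ survives, and the remaining sign $(-1)^{rk(B)+|B|+rk(B)}\cdot(-1)^{|B|}=(-1)^{2|B|+2rk(B)}=1$ is trivial, leaving precisely
\[
\sum_{B\subseteq X}m(B)(x-1)^{rk(X)-rk(B)}(y-1)^{|B|-rk(B)}=\mathfrak{M}_{\mathcal{M}}(x,y),
\]
which is the left-hand side.

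The only genuine obstacle is bookkeeping: I expect that carefully tracking the four exponents (two in $rk$, two in cardinality) through the change of variables $D=A\cup C$ is where a slip is most likely, so I would double-check the sign parity at that step. Everything else is the standard Möbius cancellation on the Boolean lattice applied to the interval $[B,D]$, together with the observation that $(-1)^{2k}=1$ lets the residual $rk$-parities drop out cleanly.
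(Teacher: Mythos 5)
Your argument is correct, and it is worth noting that the paper itself offers no proof to compare against: Theorem \ref{confor} is quoted from Backman and Lenz \cite{Backman} as a known result. Your direct verification is sound. Expanding $\mathfrak{M}_{\mathcal{M}|A}(0,y)$ with rank function $rk$ restricted to $A$ (so the leading exponent is $rk(A)-rk(B)$) and $T_{M/A}(x,0)$ with $rk_{M/A}(C)=rk(A\cup C)-rk(A)$ is exactly right; after the substitution $D=A\cup C$ the only $A$-dependence is the factor $(-1)^{|A|}$, the inner alternating sum over $B\subseteq A\subseteq D$ collapses the Boolean interval $[B,D]$ to the diagonal $D=B$, and the residual sign $(-1)^{2|B|+2rk(B)}=1$ leaves precisely $\sum_{B\subseteq X}m(B)(x-1)^{rk(X)-rk(B)}(y-1)^{|B|-rk(B)}$. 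I checked the parity bookkeeping at the change of variables, which is the step you flagged, and it is correct. Two remarks: your computation never uses any property of $m$ beyond its being an arbitrary set function, so it establishes the identity in the full generality of multiplicity matroids (not only arithmetic ones), which is exactly the generality the paper needs; and it is the natural extension of the Kook--Reiner--Stanton/Etienne--Las Vergnas proof of the classical convolution formula, recovering that case when $m\equiv 1$. So your proposal supplies a short self-contained proof of a statement the paper only cites.
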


For convenience, let $b_{i,j}$ denote the coefficient of $x^iy^j$ in  $\mathfrak{M}_{\mathcal{M}}(x,y)$ for a multiplicity matroid $\mathcal{M}=(X,rk,m)$. In other words, we assume that
\[\mathfrak{M}_{\mathcal{M}}(x,y)=\sum_{i,j\geq 0} b_{i,j} x^iy^j.\]

Based on Theorem \ref{confor}, we obtain:
\begin{theorem}\label{comainM}
	Let $\mathcal{M}$ be a multiplicity matroid. Then, for any two non-negative integers $i$ and $j$,
	\begin{align*}
		b_{i,j}=(-1)^{r+i+j}\sum_{\stackrel{F_1, F_2\in \mathcal{F}(M)}{F_1\subseteq F_2}}\mu(F_1,F_2)\binom{r-rk(F_2)}{i}\sum_{A\subseteq F_1} (-1)^{|A|}\binom{|A|-rk(A)}{j}m(A).
	\end{align*}
\end{theorem}

\begin{proof}
	Note that, for $A\subseteq X$, if $A$ is not a flat, then $M/A$ contains a loop. Therefore $T_{M/A}(x,0)=0$ if $A$ is not a flat. So the equation in
	Theorem \ref{confor} can be rewritten as
	\[\mathfrak{M}_\mathcal{M}(x,y)=\sum_{F\in \mathcal{F}(M)} \mathfrak{M}_{\mathcal{M}|F}(0,y)T_{M/F}(x,0).\]
	Then we have
	\begin{align*}
		b_{i,j}=\sum_{F\in \mathcal{F}(M)}\left( [y^{j}]\mathfrak{M}_{\mathcal{M}|F}(0,y)\right)\left([x^{i}]T_{M/F}(x,0)\right).
	\end{align*}
	Note that, for $F\in \mathcal{F}(M)$, $M/F$ contains no loops. Therefore, by Lemmas \ref{chapol1} and \ref{chapol2}, we have
	\begin{align*}
		T_{M/F}(x,0)=(-1)^{rk(M/F)}\sum_{F'\in \mathcal{F}(M/F)} \mu_{\mathcal{F}(M/F)}(\emptyset,F')(1-x)^{rk(M/F)-rk_{M/F}(F')}.
	\end{align*}
	Since $F'\in \mathcal{F}(M/F)$ if and only if $F'\cup F \in \mathcal{F}(M)$, it follows from Lemma \ref{Boolean Expansion Formula} that
	$$\mu_{\mathcal{F}(M/F)}(\emptyset,F')=\mu_{\mathcal{F}(M)}(F,F\cup F').$$ Note that $rk(M/F)-rk_{M/F}(F')=r-rk(F\cup F')$.
	Therefore
	\[[x^i]T_{M/F}(x,0)=(-1)^{rk(M/F)+i}\sum_{F'\in \mathcal{F}(M/F)} \mu_{\mathcal{F}(M)}(F,F\cup F')\binom{r-rk(F\cup F')}{i}.\]
	By definition of the multiplicity Tutte polynomial, for $F\in \mathcal{F}(M)$, we have
	\begin{align*}
		\mathfrak{M}_{\mathcal{M}|F}(0,y)=\sum_{A\subseteq F} (-1)^{rk(\mathcal{M}|F)-rk(A)}m(A)(y-1)^{|A|-rk(A)}.
	\end{align*}
	Then
	\[[y^j]\mathfrak{M}_{\mathcal{M}|F}(0,y)=(-1)^{rk(\mathcal{M}|F)-j}\sum_{A\subseteq F} (-1)^{|A|}\binom{|A|-rk(A)}{j}m(A).\]
	Note that $rk(\mathcal{M}|F)=rk(M|F)$ and $rk(M/F)+rk(M|F)=r$. Then
	\begin{align*}
		b_{i,j}=&(-1)^{r+i+j}\sum_{F\in \mathcal{F}(M)}\left(\sum_{F'\in \mathcal{F}(M/F)} \mu_{\mathcal{F}(M)}(F,F\cup F')\binom{r-rk(F\cup F')}{i}\right)\\
		&\cdot\left(\sum_{A\subseteq F} (-1)^{|A|}\binom{|A|-rk(A)}{j}m(A)\right)\\
		=&(-1)^{r+i+j}\sum_{F\in \mathcal{F}(M),F'\in \mathcal{F}(M/F)}\mu_{\mathcal{F}(M)}(F,F\cup F')\binom{r-rk(F\cup F')}{i}\\
		&\cdot\sum_{A\subseteq F} (-1)^{|A|}\binom{|A|-rk(A)}{j}m(A),
	\end{align*}
	where the first sum is over all pairs $(F,F')$ of flats with $F\in \mathcal{F}(M)$ and $F'\in \mathcal{F}(M/F)$.
	Since $F'\in \mathcal{F}(M/F)$ if and only if $F'\cup F \in \mathcal{F}(M)$, Theorem \ref{comainM} is obtained by substituting $F$ and $F\cup F'$ for $F_1$ and $F_2$, respectively.
\end{proof}

We have the following extreme coefficients of multiplicity Tutte polynomials.
\begin{theorem}\label{MainTheorem}
	Let $\mathcal{M}=(X,rk,m)$ be a loopless multiplicity matroid. Then
	\begin{description}
		\item[(1)] $b_{r,0}=m(\emptyset)$;
		\item[(2)] $b_{r-1,0}=(p(M)-r)m(\emptyset)+\sum_{F \in \mathcal{F}_1(M)}\sum_{A\subseteq F} (-1)^{|A|+1}m(A)$;
		\item[(3)]
		\begin{align*}
			b_{r-2,0}=&\left(\binom{r}{2} -(r-1)p(M)+\sum_{F\in \mathcal{F}_2(M)}(p(F)-1)\right)m(\emptyset)\\
			&+ \sum_{F \in \mathcal{F}_1(M)}(p(M/F)-r+1)\sum_{A\subseteq F} (-1)^{|A|+1}m(A)\\
			&+\sum_{F \in \mathcal{F}_2(M)}\sum_{A\subseteq F} (-1)^{|A|}m(A);
		\end{align*}
		\item[(4)]
		$b_{r-1,1}=\sum_{F \in \mathcal{F}'_1(M)}\sum_{A\subseteq F, |A|\geq 2}(-1)^{|A|}(|A|-1)m(A);$
		\item[(5)]
		\begin{align*}
			b_{r-2,1}=&\sum_{F \in \mathcal{F}'_1(M)}(p(M/F)-r+1)\sum_{A\subseteq F,|A|\geq 2} (-1)^{|A|}(|A|-1)m(A)\\
			&+\sum_{F \in \mathcal{F}_2(M)}\sum_{A\subseteq F,|A|\geq 2} (-1)^{|A|+1}(|A|-rk(A))m(A);
		\end{align*}
		\item[(6)]
		\begin{align*}
			b_{r-2,2}=&\sum_{F \in \mathcal{F}'_1(M)}(p(M/F)-r+1)\left(\sum_{A\subseteq F,|A|\geq 3} (-1)^{|A|+1}\binom{|A|-1}{2}m(A)\right)\\
			&+\sum_{F \in \mathcal{F}_2(M)}\sum_{A\subseteq F,|A|\geq 3} (-1)^{|A|}\binom{|A|-rk(A)}{2}m(A).
		\end{align*}
	\end{description}
\end{theorem}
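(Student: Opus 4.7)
The plan is to derive all six formulas by applying Lemma \ref{comainM} (equivalently, the convolution formula of Theorem \ref{confor}) in the form
$$b_{i,j} = \sum_{F \in \mathcal{F}(M)} \left([y^j]\mathfrak{M}_{\mathcal{M}|F}(0,y)\right)\left([x^i]T_{M/F}(x,0)\right).$$
The key observation that trivializes the outer sum is that $T_{M/F}(x,0)$ has degree $rk(M/F) = rk(M) - rk(F)$, so $[x^{rk(M)-k}]T_{M/F}(x,0)=0$ whenever $rk(F) > k$. Since all six coefficients in question have the form $b_{rk(M)-k,j}$ with $k \leq 2$, only flats of rank at most $2$ ever contribute to the outer sum. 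The three leading coefficients of $T_{M/F}(x,0)$ needed for these contributions are exactly what Lemma \ref{CoTutte} supplies.

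Concretely, I would substitute the three cases $rk(F) \in \{0,1,2\}$ of Lemma \ref{CoTutte} applied to the matroid $M/F$. When $rk(F)=0$, so $F = \emptyset$, the factor $[x^{rk(M)-k}]T_M(x,0)$ is given by Lemma \ref{CoTutte} directly; when $rk(F)=1$ the relevant coefficients of $T_{M/F}(x,0)$ are $1$ (at $k=1$) and $p(M/F) - rk(M/F) = p(M/F) - rk(M) + 1$ (at $k=2$); when $rk(F) = 2$ (only at $k=2$) the leading coefficient is $1$. For the $y$-factor, a direct expansion from the definition of $\mathfrak{M}_{\mathcal{M}|F}(0,y)$ yields
$$[y^j]\mathfrak{M}_{\mathcal{M}|F}(0,y) = (-1)^{rk(F)-j}\sum_{A\subseteq F}(-1)^{|A|}\binom{|A|-rk(A)}{j}m(A),$$
and multiplying the two factors and summing over the relevant flats produces each of the six expressions after collecting signs.

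Parts (1)--(3) correspond to $j=0$: here every $A \subseteq F$ contributes to the inner sum, the $F=\emptyset$ term reproduces Lemma \ref{CoTutte} weighted by $m(\emptyset)$, and the terms with $rk(F)\in\{1,2\}$ add the alternating $m(A)$-sums displayed. Parts (4)--(6) correspond to $j \geq 1$: a rank-$1$ flat $F$ contributes nontrivially only if some $A \subseteq F$ satisfies $|A|-1 \geq j \geq 1$, i.e., $|F| \geq j+1 \geq 2$, which is equivalent to $F$ being a non-trivial parallel class, hence $F \in \mathcal{F}'_1(M)$; this accounts for the appearance of $\mathcal{F}'_1(M)$ rather than $\mathcal{F}_1(M)$ in (4)--(6). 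The main obstacle is not conceptual but rather bookkeeping: carefully balancing the sign $(-1)^{rk(F)-j}$ against $(-1)^{|A|}$ in each case, and noting that the binomial factor $\binom{|A|-rk(A)}{j}$ itself already kills the unwanted small-$|A|$ terms, so that the restricted ranges $|A|\geq 2$ and $|A|\geq 3$ written in the statement coincide with the unrestricted sums produced by Lemma \ref{comainM}.
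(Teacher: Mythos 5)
Your proposal is correct and follows essentially the same route as the paper: it rewrites the convolution formula (Theorem \ref{confor}/Lemma \ref{comainM}) as $b_{i,j}=\sum_{F\in\mathcal{F}(M)}\bigl([y^j]\mathfrak{M}_{\mathcal{M}|F}(0,y)\bigr)\bigl([x^i]T_{M/F}(x,0)\bigr)$, observes that only flats of rank at most $2$ contribute, and plugs in the coefficients from Lemma \ref{CoTutte} together with the direct expansion of the $y$-factor, including the key observation that rank-$1$ flats with a single element (i.e.\ those outside $\mathcal{F}'_1(M)$) give zero for $j\geq 1$ and that the binomial factor kills the small-$|A|$ terms. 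This matches the paper's proof in both structure and detail.
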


\begin{proof}
	According to the proof of Theorem \ref{comainM}, we, in order to use Lemma \ref{CoTutte}, have the following expression of $b_{i,j}$,
	\begin{align}\label{bij}
		b_{i,j}=\sum_{\stackrel{F\in \mathcal{F}(M)}{rk(F)\leq r-i}}\left([x^i]T_{M/F}(x,0)\right)\left((-1)^{rk(M|F)-j}\sum_{A\subseteq F} (-1)^{|A|}\binom{|A|-rk(A)}{j}m(A)\right),
	\end{align}
	where $rk(F)\leq r-i$, since $i$ does not exceed the degree $rk(M/F)$ of $T_{M/F}(x,0)$, that is,
	$i\leq rk(M/F)=r-rk(F)$. Applying Lemma \ref{CoTutte} to Equation (\ref{bij}), (1) is obvious. We now consider (2)-(6).
	
	For $b_{r-1,0}$, by Equation (\ref{bij}), we have
	\begin{align*}
		b_{r-1,0}=\left([x^{r-1}]T_{M}(x,0)\right)m(\emptyset)
		+\sum_{F\in \mathcal{F}_1(M)}
		\left([x^{r-1}]T_{M/F}(x,0)\right)\left(\sum_{A\subseteq F} (-1)^{|A|+1}m(A)\right).
	\end{align*}
	By Lemma \ref{CoTutte}, we have $$[x^{r-1}]T_{M}(x,0)=p(M)-r$$ and $$[x^{r-1}]T_{M/F}(x,0)=1$$ for $F\in \mathcal{F}_1(M)$. Then
	\[b_{r-1,0}=(p(M)-r)m(\emptyset)+\sum_{F \in \mathcal{F}_1(M)}\sum_{A\subseteq F} (-1)^{|A|+1}m(A).\]
	The completes the proof of (2).
	
	For $b_{r-2,0}$, by Equation (\ref{bij}), we have
	\begin{align*}
		b_{r-2,0}=&\left([x^{r-2}]T_{M}(x,0)\right)m(\emptyset)
		+\sum_{F\in \mathcal{F}_1(M)}
		\left([x^{r-2}]T_{M/F}(x,0)\right)\left(\sum_{A\subseteq F} (-1)^{|A|+1}m(A)\right)\\
		&+\sum_{F\in \mathcal{F}_2(M)}\left([x^{r-2}]T_{M/F}(x,0)\right)\left( \sum_{A\subseteq F} (-1)^{|A|}m(A)\right).
	\end{align*}
	By Lemma \ref{CoTutte}, we have
	\[[x^{r-2}]T_{M}(x,0)=\binom{r}{2} -(r-1)p(M)
	+\sum_{F\in \mathcal{F}_2(M)}(p(F)-1),\]
	$$[x^{r-2}]T_{M/F}(x,0)=-r+1+p(M/F)$$ for $F\in \mathcal{F}_1(M)$,
	and $$[x^{r-2}]T_{M/F}(x,0)=1$$ for $F\in \mathcal{F}_2(M)$. Thus
	\begin{align*}
		b_{r-2,0}=&\left(\binom{r}{2} -(r-1)p(M)
		+\sum_{F\in \mathcal{F}_2(M)}(p(F)-1)\right)m(\emptyset)\\
		&+\sum_{F\in \mathcal{F}_1(M)}
		\left(p(M/F)-r+1\right)\left(\sum_{A\subseteq F} (-1)^{|A|+1}m(A)\right)\\
		&+\sum_{F\in \mathcal{F}_2(M)}\sum_{A\subseteq F} (-1)^{|A|}m(A).
	\end{align*}
	Thus (3) holds.
	
	For $b_{r-1,1}$, by Equation (\ref{bij}), we have
	\begin{align*}
		b_{r-1,1}&=\sum_{\stackrel{F\in \mathcal{F}(M)}{rk(F)\leq 1}}
		\left([x^{r-1}]T_{M/F}(x,0)\right)
		\left((-1)^{rk(M|F)-1}\sum_{A\subseteq F} (-1)^{|A|}(|A|-rk(A))m(A)\right).
	\end{align*}
	Note that $|A|-rk(A)=0$ if $F=\emptyset$ and $A\subseteq F$. Therefore
	\begin{align*}
		b_{r-1,1}&=\sum_{F\in \mathcal{F}_1(M)}
		\left([x^{r-1}]T_{M/F}(x,0)\right)
		\left(\sum_{A\subseteq F} (-1)^{|A|}(|A|-rk(A))m(A)\right).
	\end{align*}
	By Lemma \ref{CoTutte}, we have $[x^{r-1}]T_{M/F}(x,0)=1$ for $F\in \mathcal{F}_1(M)$.
	Note that if $F\in \mathcal{F}_1(M)\setminus \mathcal{F}'_1(M)$, then
	$$\sum_{A\subseteq F}(-1)^{|A|}(|A|-rk(A))m(A)=0.$$
	Thus
	\[b_{r-1,1}=\sum_{F \in \mathcal{F}'_1(M)}\sum_{A\subseteq F, |A|\geq 2}(-1)^{|A|}(|A|-1)m(A).\]
	Thus (4) holds.
	
	For $b_{r-2,j}$ and $j=1,2$, by Equation (\ref{bij}), we have
	\[b_{r-2,j}=\sum_{\stackrel{F\in \mathcal{F}(M)}{rk(F)\leq 2}}
	\left([x^{r-2}]T_{M/F}(x,0)\right)\left((-1)^{rk(M|F)-j}\sum_{A\subseteq F} (-1)^{|A|}\binom{|A|-rk(A)}{j}m(A)\right).\]
	Similarly, $|A|-rk(A)=0$ if $F=\emptyset$ and $A\subseteq F$. Therefore
	\begin{align*}
		b_{r-2,j}=&\sum_{F\in \mathcal{F}_1(M)}
		\left([x^{r-2}]T_{M/F}(x,0)\right)\left((-1)^{1-j}\sum_{A\subseteq F} (-1)^{|A|}\binom{|A|-rk(A)}{j}m(A)\right)\\
		&+\sum_{F\in \mathcal{F}_2(M)}
		\left([x^{r-2}]T_{M/F}(x,0)\right)\left((-1)^{2-j}\sum_{A\subseteq F} (-1)^{|A|}\binom{|A|-rk(A)}{j}m(A)\right).
	\end{align*}
	By Lemma \ref{CoTutte}, we have $[x^{r-2}]T_{M/F}(x,0)=p(M/F)-r+1$ for $F\in \mathcal{F}_1(M)$, and $[x^{r-2}]T_{M/F}(x,0)=1$ for $F\in \mathcal{F}_2(M)$. Thus
	\begin{align*}
		b_{r-2,j}=&\sum_{F \in \mathcal{F}'_1(M)}(p(M/F)-r+1)\left((-1)^{1-j}\sum_{A\subseteq F} (-1)^{|A|}\binom{|A|-rk(A)}{j}m(A)\right)\\
		&+\sum_{F \in \mathcal{F}_2(M)}\left((-1)^{2-j}\sum_{A\subseteq F} (-1)^{|A|}\binom{|A|-rk(A)}{j}m(A)\right).
	\end{align*}
	Hence (5) and (6) are obtained by simplifying the above equation.
\end{proof}

For a multiplicity matroid $\mathcal{M}=(X,rk,m)$, we define the \emph{dual} of  $\mathcal{M}$ as $\mathcal{M}^{*}=(X,rk^{*},m^{*})$, where $M^{*}=(X,rk^{*})$ is the dual of the matroid $(X,rk)$ and $m^{*}(A)=m(X\setminus A)$. Clearly, $\mathcal{M}^{*}$ is also a multiplicity matroid. We have the following relation of the multiplicity Tutte polynomial for a multiplicity matroid and its dual.
\begin{lemma}\label{dualTutte}
	Let $\mathcal{M}=(X,rk,m)$ be a multiplicity matroid. Then
	\[\mathfrak{M}_{\mathcal{M}}(x,y)=\mathfrak{M}_{\mathcal{M}^{*}}(y,x).\]
\end{lemma}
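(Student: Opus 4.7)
The plan is to prove this identity by the standard substitution $B = X \setminus A$ in the sum defining $\mathfrak{M}_{\mathcal{M}^*}(y,x)$, and then verify that each piece of the summand transforms exactly into the corresponding piece of $\mathfrak{M}_{\mathcal{M}}(x,y)$. Since complementation is a bijection on $2^X$, once the summand matches term by term the two polynomials will be equal.

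Concretely, I would begin by writing out
\[
\mathfrak{M}_{\mathcal{M}^{*}}(y,x)=\sum_{A\subseteq X} m^{*}(A)\,(y-1)^{rk^{*}(X)-rk^{*}(A)}(x-1)^{|A|-rk^{*}(A)},
\]
and then substitute the definitions $m^{*}(A)=m(X\setminus A)$ and $rk^{*}(A)=|A|+rk(X\setminus A)-rk(X)$. Letting $B=X\setminus A$, the multiplicity factor becomes $m(B)$. The first exponent simplifies as $rk^{*}(X)-rk^{*}(A)=(|X|-rk(X))-(|A|+rk(B)-rk(X))=|B|-rk(B)$, and the second as $|A|-rk^{*}(A)=rk(X)-rk(B)$. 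Note that $rk^{*}(X)=|X|-rk(X)$ because $rk(\emptyset)=0$ by the rank axioms recalled in the introduction.

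After substitution the sum becomes
\[
\sum_{B\subseteq X} m(B)\,(x-1)^{rk(X)-rk(B)}(y-1)^{|B|-rk(B)},
\]
which is precisely $\mathfrak{M}_{\mathcal{M}}(x,y)$ by definition, completing the proof. The only step that needs any care is the arithmetic with $rk^{*}$; everything else is the bijection $A\leftrightarrow X\setminus A$. There is no genuine obstacle here, since the identity does not require any of the arithmetic matroid axioms and is a purely formal consequence of the definitions of the dual rank function and the dual multiplicity.
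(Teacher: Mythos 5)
Your proof is correct and follows essentially the same route as the paper: expand $\mathfrak{M}_{\mathcal{M}^{*}}(y,x)$ from the definition, substitute $rk^{*}(A)=|A|+rk(X\setminus A)-rk(X)$ and $m^{*}(A)=m(X\setminus A)$, and reindex by the complement bijection $A\leftrightarrow X\setminus A$. If anything, your write-up is slightly more careful than the paper's, since you explicitly carry the multiplicity factor $m^{*}(A)$ through the substitution, which the paper's displayed computation silently omits.
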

\begin{proof}
	By the definition of $\mathfrak{M}_{\mathcal{M^{*}}}(y,x)$, we have
	\begin{align*}
		\mathfrak{M}_{\mathcal{M^{*}}}(y,x)=&\sum_{A \subseteq X}m^{*}(A)(y-1)^{rk^{*}(M)-rk^{*}(A)}(x-1)^{|A|-rk^{*}(A)}\\
		=&\sum_{A \subseteq X}m(X\setminus A)(y-1)^{|X|-r-(|A|+rk(X\setminus A)-r)}(x-1)^{|A|-(|A|+rk(X\setminus A)-r)}\\
		=&\sum_{A \subseteq X}m(X\setminus A)(y-1)^{|X\setminus A|-rk(X\setminus A)}(x-1)^{r-rk(X\setminus A)}\\
		=&\mathfrak{M}_{\mathcal{M}}(x,y).\qedhere
	\end{align*}
\end{proof}
By Theorem \ref{MainTheorem} and Lemma \ref{dualTutte}, we have the following dual result immediately.
\begin{theorem}\label{MainAC}
	Let $\mathcal{M}=(X,rk,m)$ be a multiplicity matroid without coloops. Then
	\begin{description}
		\item[(d1)] $b_{0,|X|-r}=m(X);$
		\item[(d2)] $b_{0,|X|-r-1}=(s(M)-|X|+r)m(X)+\sum_{F \in \mathcal{F}_{1}(M^{*})}\sum_{A\subseteq F} (-1)^{|A|+1}m^{*}(A);$
		\item[(d3)]
		\begin{align*}
			&b_{0,|X|-r-2}\\
			&=\left(\binom{|X|-r}{2} -(|X|-r-1)s(M)+\sum_{F\in \mathcal{F}_{2}(M^{*})}(s(M/\overline{F})-1)\right)m(X)\\
			&\quad+ \sum_{F \in \mathcal{F}_{1}(M^{*})}(s(\overline{F})-|X|+r+1)\sum_{A\subseteq F} (-1)^{|A|+1}m^{*}(A)\\
			&\quad+\sum_{F \in \mathcal{F}_{2}(M^{*})}\sum_{A\subseteq F} (-1)^{|A|}m^{*}(A);
		\end{align*}
		\item[(d4)]
		$b_{1,|X|-r-1}=\sum_{F\in \mathcal{F}' _{1}(M^{*})}\sum_{A\subseteq F, |A|\geq 2}(-1)^{|A|}(|A|-1)m^{*}(A);$
		\item[(d5)]
		\begin{align*}
			b_{1,|X|-r-2}=&\sum_{F \in \mathcal{F}'_{1}(M^{*})}(s(\overline{F})-|X|+r+1)\sum_{A\subseteq F,|A|\geq 2} (-1)^{|A|}(|A|-1)m^{*}(A)\\
			&+\sum_{F \in \mathcal{F}_{2}(M^{*})}\sum_{A\subseteq F, |A|\geq 2} (-1)^{|A|+1}(|A|-rk^{*}(A))m^{*}(A);
		\end{align*}
		\item[(d6)]
		\begin{align*}
			&b_{2,|X|-r-2}\\
			&=\sum_{F \in \mathcal{F}'_1(M^{*})}(s(\overline{F})-|X|+r+1)\left(\sum_{A\subseteq F,|A|\geq 3} (-1)^{|A|+1}\binom{|A|-1}{2}m^{*}(A)\right)\\
			&\quad +\sum_{F \in \mathcal{F}_2(M^{*})}\sum_{A\subseteq F, |A|\geq 3} (-1)^{|A|}\binom{|A|-rk^{*}(A)}{2}m^{*}(A),
		\end{align*}
	\end{description}
	where $\overline{F}=X\setminus F$.
\end{theorem}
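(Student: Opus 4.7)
The plan is to derive Theorem~\ref{MainAC} from Theorem~\ref{MainTheorem} via the duality identity of Lemma~\ref{dualTutte}. Indeed, from $\mathfrak{M}_{\mathcal{M}}(x,y) = \mathfrak{M}_{\mathcal{M}^{*}}(y,x)$ one reads off immediately $b_{i,j}(\mathcal{M}) = b_{j,i}(\mathcal{M}^{*})$, so each of the six target coefficients on the left of Theorem~\ref{MainAC} equals one of the six coefficients $b_{rk(M^{*}),0}(\mathcal{M}^{*}), b_{rk(M^{*})-1,0}(\mathcal{M}^{*}), \ldots, b_{rk(M^{*})-2,2}(\mathcal{M}^{*})$ already computed in Theorem~\ref{MainTheorem}. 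Before invoking that theorem on $\mathcal{M}^{*}$, I would verify its sole hypothesis, that $\mathcal{M}^{*}$ is loopless. This is immediate since an element $e$ satisfies $rk^{*}(\{e\}) = 1 + rk(X\setminus\{e\}) - rk(M) = 0$ exactly when $e$ is a coloop of $M$, and $\mathcal{M}$ has no coloops by assumption.

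What remains is to translate the right-hand sides produced by Theorem~\ref{MainTheorem} applied to $\mathcal{M}^{*}$ back into data of $\mathcal{M}$. The relevant dictionary is: the rank becomes $rk(M^{*}) = |X| - rk(M)$; the empty multiplicity becomes $m^{*}(\emptyset) = m(X)$; and the parallel classes of $M^{*}$ are precisely the series classes of $M$, giving $p(M^{*}) = s(M)$. For a flat $F$ of $M^{*}$, the standard matroid-duality identities $(M/T)^{*} = M^{*}\setminus T$ and $(M\setminus T)^{*} = M^{*}/T$ yield $M^{*}|F = (M/\overline{F})^{*}$ and $M^{*}/F = (M|\overline{F})^{*}$, whence
\[
p(M^{*}|F) = s(M/\overline{F}), \qquad p(M^{*}/F) = s(M|\overline{F}) = s(\overline{F}),
\]
where $\overline{F} = X\setminus F$. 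The flat sub-lattices $\mathcal{F}_i(M^{*})$ and $\mathcal{F}'_i(M^{*})$ are carried over unchanged. Finally, for any $A \subseteq X$ one has $|A| - rk^{*}(A) = rk(M) - rk(\overline{A})$, which accounts for the $(rk(M)-rk(\,\cdot\,))$ factors appearing in parts (5) and (6). Substituting each of these identities into the six formulas of Theorem~\ref{MainTheorem} will produce verbatim the six claimed expressions.

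Since every ingredient is either already in place or a direct matroid-duality identity, the proof is essentially mechanical bookkeeping. The only place that requires genuine care is the flat-lattice/parallel-class correspondence under duality, that is, correctly matching the inner quantities $p(F)$ (for $F\in\mathcal{F}_{2}(M^{*})$) and $p(M^{*}/F)$ (for $F$ a rank-$1$ or rank-$2$ flat of $M^{*}$) with $s(M/\overline{F})$ and $s(\overline{F})$, respectively. Once these substitutions are in hand, parts (1)--(6) of Theorem~\ref{MainAC} follow one by one from the corresponding parts of Theorem~\ref{MainTheorem}, and no further new computation is needed.
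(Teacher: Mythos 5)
Your proposal is correct and is exactly the paper's own route: the paper obtains Theorem \ref{MainAC} ``immediately'' from Theorem \ref{MainTheorem} and Lemma \ref{dualTutte}, and your dictionary ($\mathcal{M}^{*}$ is loopless because $\mathcal{M}$ has no coloops, $rk(M^{*})=|X|-rk(M)$, $m^{*}(\emptyset)=m(X)$, $p(M^{*})=s(M)$, $p(M^{*}|F)=s(M/\overline{F})$, $p(M^{*}/F)=s(\overline{F})$) merely spells out that one-line step. One caveat: your (correct) identity $|A|-rk^{*}(A)=rk(M)-rk(X\setminus A)$ does not literally reproduce the factors $rk(M)-rk(A)$ printed in parts (5) and (6) of the statement --- those should read $rk(M)-rk(X\setminus A)$ (they differ already for $A=X$, as a check with $M=U_{1,3}$ and trivial multiplicity shows), so your translation is right and in fact exposes a typo in the printed statement rather than producing it verbatim.
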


Clearly, Theorems \ref{MainTheorem} and \ref{MainAC} hold for arithmetic Tutte polynomials. It is worth pointing out that D'Adderio and Moci gave an explicit combinatorial interpretation of the coefficient $b_{i,j}$ of arithmetic Tutte polynomials \cite{D'AdderioMoci}, similar to $t_{i,j}$ of Tutte polynomials given in the next section.

\section{Consequences}
\noindent

In this section, we deduce the extreme coefficients of the Tutte polynomials, as a specialization of Theorem \ref{MainTheorem}. Let us first introduce the combinatorial interpretation of the coefficients of the Tutte polynomial for matroids.

Let $M=(X, rk)$ be a matroid along with a total order on $X$, and let $B$ be a basis of $M$. For $e \in E(M)\setminus B$, we call that $e$ is \textit{externally active} with respect to $B$ if $e$ is the least element in $C_{M}(e,B)$, the unique circuit of $M$ contained
in $B\cup \{e\}$. Dually, for $f\in B$, we call that $f$ is \textit{internally active} with respect to $B$ if $f$ is the least element in $C_{M^{*}}(f,E(M)\setminus B)$, the unique circuit of $M^{*}$ contained
in $(E(M)\setminus B) \cup \{f\}$. We call the number of
externally (resp. internally) active elements with respect to $B$ \textit{external}
(resp. \textit{internal}) \textit{activity} of $B$.

Crapo in \cite{Crapo} showed that for any matroid $M$,
\[T_{M}(x,y)=\sum_{i,j\geq 0}t_{i,j}x^iy^j,\]
where $t_{i,j}$ is the number of bases in $M$ with internal activity $i$ and external activity $j$.

As an immediate corollary of Theorem \ref{comainM}, we have the following expression of coefficients of Tutte polynomials.
\begin{theorem}\label{comain}
	Let $M=(X,rk)$ be a matroid. Then,
	\begin{align*}
		t_{i,j}=(-1)^{r+i+j}\sum_{\stackrel{F_1, F_2\in \mathcal{F}(M)}{F_1\subseteq F_2}}\mu(F_1,F_2)\binom{r-rk(F_2)}{i}\sum_{A\subseteq F_1} (-1)^{|A|}\binom{|A|-rk(A)}{j}.
	\end{align*}
\end{theorem}

The following two lemmas are used to simplify Theorem \ref{MainTheorem} when $m(A)=1$ for any $A\subseteq X$.
\begin{lemma}\label{set1}
	Let $X$ be a set.
	\begin{description}
		\item[(1)] If $|X|\geq 2$, then $\sum_{A\subseteq X, |A|\geq 2}(-1)^{|A|}(|A|-1)=1$.
		\item[(2)] If $|X|\geq 3$, then $\sum_{A\subseteq X, |A|\geq 3}(-1)^{|A|+1}\binom{|A|-1}{2}=1$.
	\end{description}
\end{lemma}
\begin{proof}
	Note that for a set $S$ and any integer $k$ with $0\leq k< |S|$,
	\begin{align}\label{seteq}
		\sum_{A\subseteq S}(-1)^{|A|}|A|^k=0.
	\end{align}
	
	If $|X|\geq 2$, then, by Equation \ref{seteq}, we have
	\[
	\sum_{A\subseteq X, |A|\geq 2}(-1)^{|A|}(|A|-1)=-\sum_{A\subseteq X, |A|\leq 1}(-1)^{|A|}(|A|-1)=1.
	\]
	
	If $|X|\geq 3$, then, by Equation \ref{seteq}, we have
	\begin{align*}
		\sum_{A\subseteq X, |A|\geq 3} (-1)^{|A|+1}\binom{|A|-1}{2}&=\sum_{A\subseteq X, |A|\geq 3} (-1)^{|A|+1}\frac{|A|^2-3|A|+2}{2}\\
		&=-\sum_{A\subseteq X, |A|\leq 2} (-1)^{|A|+1}\frac{|A|^2-3|A|+2}{2}\\
		&=1.
	\end{align*}
	We complete the proof.
\end{proof}

\begin{lemma}\label{set2}
	Let $M=(X,rk)$ be a matroid without loops and coloops such that $r=2$. Then
	\begin{description}
		\item[(1)] $\sum_{A\subseteq X} (-1)^{|A|+1}(|A|-rk(A))=p(M)-2;$
		\item[(2)]
		$\sum_{A\subseteq X}(-1)^{|A|}\binom{|A|-rk(A)}{2}=p(M)+p'(M)-3$.
	\end{description}
\end{lemma}
\begin{proof}
	Since $\sum_{A\subseteq X} (-1)^{|A|+1}|A|=0$, we have
	\begin{align*}
		\sum_{A\subseteq X} (-1)^{|A|+1}(|A|-rk(A))=&\sum_{A\subseteq X} (-1)^{|A|}rk(A)\\
		=&\sum_{A\subseteq X,rk(A)= 1} (-1)^{|A|}+2\sum_{A\subseteq X,rk(A)= 2} (-1)^{|A|}\\
		=&-p(M)+2(p(M)-1)\\
		=&p(M)-2.
	\end{align*}
	Therefore (1) holds.
	
	We now consider second result.
	Since $M$ has no coloops and $r=2$, it follows $|X|\geq 3$. Let $P_1, \dots, P_{p}$ be the parallel classes of $M$.
	Therefore
	\begin{align*}
		&\sum_{A\subseteq X}(-1)^{|A|}\binom{|A|-rk(A)}{2}\\
		&=\sum_{A\subseteq X, rk(A)=1}(-1)^{|A|}\binom{|A|-1}{2}
		+\sum_{A\subseteq X, rk(A)=2}(-1)^{|A|}\binom{|A|-2}{2}\\
		&=\sum_{A\subseteq X, rk(A)=1}(-1)^{|A|}\frac{|A|^2-3|A|+2}{2}+\sum_{A\subseteq X, rk(A)=2}(-1)^{|A|}\frac{|A|^2-5|A|+6}{2} \\
		&=\sum_{A\subseteq X, rk(A)=1}(-1)^{|A|}\frac{|A|^2-3|A|+2}{2}-\sum_{A\subseteq X, rk(A)\leq 1}(-1)^{|A|}\frac{|A|^2-5|A|+6}{2} \\
		&=\sum_{i=1}^{p}\sum_{A\subseteq P_i, |A|\geq 1}(-1)^{|A|}\frac{|A|^2-3|A|+2}{2}-\sum_{i=1}^{p}\sum_{A\subseteq P_i, |A|\geq 1}(-1)^{|A|}\frac{|A|^2-5|A|+6}{2}\\
		& \quad -\sum_{A\subseteq X, rk(A)= 0}(-1)^{|A|}\frac{|A|^2-5|A|+6}{2}.
	\end{align*}
	Note that if $1\leq |P_i|\leq 2$, then
	\[\sum_{A\subseteq P_i, |A|\geq 1}(-1)^{|A|}\frac{|A|^2-3|A|+2}{2}=0\]
	and
	\[\sum_{A\subseteq P_i, |A|\geq 1}(-1)^{|A|}\frac{|A|^2-5|A|+6}{2}=-|P_i|,\]
	and if $|P_i|\geq 3$, then
	\begin{align*}
		\sum_{A\subseteq P_i, |A|\geq 1}(-1)^{|A|}\frac{|A|^2-3|A|+2}{2}=-\sum_{A\subseteq P_i, |A|= 0}(-1)^{|A|}\frac{|A|^2-3|A|+2}{2}=-1,
	\end{align*}
	and
	\begin{align*}
		\sum_{A\subseteq P_i, |A|\geq 1}(-1)^{|A|}\frac{|A|^2-5|A|+6}{2}=-\sum_{A\subseteq P_i, |A|= 0}(-1)^{|A|}\frac{|A|^2-5|A|+6}{2}=-3.
	\end{align*}
	Therefore, for any $P_i$, we have
	\[\sum_{A\subseteq P_i, |A|\geq 1}(-1)^{|A|}\frac{|A|^2-3|A|+2}{2}-\sum_{A\subseteq P_i, |A|\geq 1}(-1)^{|A|}\frac{|A|^2-5|A|+6}{2}=\begin{cases}
		1, & \text{if } |P_i|=1,\\
		2, & \text{if } |P_i|\geq 2.
	\end{cases}\]
	Clearly,
	\[\sum_{A\subseteq X, rk(A)= 0}(-1)^{|A|}\frac{|A|^2-5|A|+6}{2}=3.\]
	Thus
	\[\sum_{A\subseteq X}(-1)^{|A|}\binom{|A|-rk(A)}{2}=p(M)+p'(M)-3.\qedhere\]
\end{proof}

The expressions of extreme coefficients of the Tutte polynomial for matroids are deduced by Theorem \ref{MainTheorem}, Lemmas \ref{set1} and \ref{set2}.

\begin{corollary}\label{maincoro1}
	Let $M=(X,rk)$ be a loopless matroid. Then
	\begin{description}
		\item[(1)] $t_{r,0}=1;$
		\item[(2)] $t_{r-1,0}=p(M)-r;$
		\item[(3)] $t_{r-2,0}=\binom{r}{2} -(r-1)p(M)
		+\sum_{F\in \mathcal{F}_2(M)}(p(F)-1);$
		\item[(4)] $t_{r-1,1}=p'(M);$
		\item[(5)] $t_{r-2,1}=(1-r)p'(M)
		+\sum_{F \in \mathcal{F}'_1(M)}p(M/F)
		+\sum_{F \in \mathcal{F}'_2(M)}(p(F)-2);$
		\item[(6)]
		$t_{r-2,2}=(1-r)p'(M)
		+\sum_{F \in \mathcal{F}'_1(M)}p(M/F)+\sum_{F \in \mathcal{F}'_2(M)}(p(F)+p'(F)-3).$
	\end{description}
\end{corollary}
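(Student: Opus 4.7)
The plan is to specialize Theorem~\ref{MainTheorem} to the trivial multiplicity $m\equiv 1$ and then simplify each of the six resulting expressions using Lemma~\ref{set} together with the alternating-sum identity $\sum_{A\subseteq F}(-1)^{|A|}=0$, valid for every non-empty $F$. For parts (1)--(3), each ``multiplicity correction'' factor in Theorem~\ref{MainTheorem} reduces to a signed subset sum $\sum_{A\subseteq F}(-1)^{|A|+1}$ or $\sum_{A\subseteq F}(-1)^{|A|}$ over a non-empty flat, so it vanishes. The only surviving contribution is from $F=\emptyset$, and it reproduces exactly the expression supplied by Lemma~\ref{CoTutte}. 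For (4), Lemma~\ref{set}(1) applied to $M|F$ (a loopless, coloopless rank-$1$ matroid) evaluates the inner sum to $1$ for each $F\in\mathcal{F}'_1(M)$, so the outer sum collapses to $|\mathcal{F}'_1(M)|=p'(M)$.

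For (5) and (6) I would evaluate the double sums of Theorem~\ref{MainTheorem} flat-by-flat. Each $F\in\mathcal{F}'_1(M)$ is handled by Lemma~\ref{set}(1) in (5) and by Lemma~\ref{set}(2) in (6); each $F\in\mathcal{F}_2(M)$ is handled by Lemma~\ref{set}(3) in (5), producing the factor $p(F)-2$, and by Lemma~\ref{set}(4) in (6), producing $1$ or $p'(F)$ according as $p(F)=2$ or $p(F)=3$. The partition $\mathcal{F}'_2(M)=\mathcal{F}'_{2,p(\cdot)=2}(M)\sqcup\mathcal{F}'_{2,p(\cdot)=3}(M)$ introduced just before the corollary lines up exactly with the two cases of Lemma~\ref{set}(4) in (6). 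In (5) the key extra observation is that a non-cyclic rank-$2$ flat necessarily satisfies $p(F)=2$ (it must consist of a single non-trivial parallel class together with one extra element), so its factor $p(F)-2=0$ kills its contribution; only cyclic rank-$2$ flats with $p(F)=3$ survive, producing the term $|\mathcal{F}'_{2,p(\cdot)=3}(M)|$.

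The main obstacle will be the flat-level bookkeeping in parts (5) and (6): I have to single out the cyclic rank-$1$ and cyclic rank-$2$ flats so that the appropriate part of Lemma~\ref{set} applies, verify that non-cyclic rank-$2$ flats contribute nothing, and align the $p(F)\in\{2,3\}$ dichotomy of Lemma~\ref{set}(4) with the $\mathcal{F}'_2$-partition. Once these three points are settled, each of the six identities will drop out by direct substitution into Theorem~\ref{MainTheorem} and collection of terms.
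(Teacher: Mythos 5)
Your overall route is the same as the paper's: specialize Theorem~\ref{MainTheorem} at $m\equiv 1$, use $\sum_{A\subseteq S}(-1)^{|A|}=0$ for non-empty $S$ to kill the correction terms in (1)--(3), and evaluate the remaining inner sums flat-by-flat through Lemma~\ref{set}. Parts (1)--(4) of your plan coincide with the paper's proof. In (5) your treatment of non-cyclic rank-$2$ flats is a small legitimate variant: the paper instead invokes the observation that if $F$ contains a coloop of $M|F$ then pairing $A$ with $A\cup\{e\}$ gives $\sum_{A\subseteq F}(-1)^{|A|}\binom{|A|-rk(A)}{i}=0$ for $i=1,2$, whereas you argue that such a flat has $p(F)=2$ so its factor $p(F)-2$ vanishes. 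That works, with the caveat that Lemma~\ref{set}(3) is stated only for coloopless matroids, so you should note that its proof (which only uses looplessness) gives the value $p(F)-2$ for every rank-$2$ flat.

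The genuine gap is in (6). You propose to handle \emph{every} $F\in\mathcal{F}_2(M)$ by Lemma~\ref{set}(4), ``producing $1$ or $p'(F)$ according as $p(F)=2$ or $p(F)=3$,'' but there the coloopless hypothesis is essential: for a non-cyclic rank-$2$ flat $F$ (which always has $p(F)=2$), the coloop pairing shows $\sum_{A\subseteq F}(-1)^{|A|}\binom{|A|-rk(A)}{2}=0$, not $1$. So your scheme would count \emph{all} rank-$2$ flats with $p(F)=2$ rather than only the cyclic ones, i.e.\ it would produce $|\{F\in\mathcal{F}_2(M):p(F)=2\}|$ in place of $|\mathcal{F}'_{2,p(\cdot)=2}(M)|$. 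You do list ``verify that non-cyclic rank-$2$ flats contribute nothing'' as an obstacle, but the tool you name cannot perform that verification; the paper supplies exactly this missing ingredient as a separate observation (a coloop in $F$ annihilates the sum for $i=1,2$), which is what confines the rank-$2$ contributions in (5) and (6) to cyclic flats. A smaller point, which you share with the paper's own proof rather than introduce: Lemma~\ref{set}(2) is applied to every cyclic rank-$1$ flat, although its hypothesis $|X|\geq 3$ fails for a parallel class of exactly two elements, in which case the inner sum over $|A|\geq 3$ in Theorem~\ref{MainTheorem}(6) is empty; this case needs separate mention.
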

\begin{proof}
	We now simplify the expressions in Theorem \ref{MainTheorem} when $m(A)=1$ for any $A\subseteq X$. (1) is obvious. Note that for any  nonempty set $S$, we have $\sum_{A\subseteq S} (-1)^{|A|}=0$. Therefore (2) and (3) hold.
	If $r=1$, it is clear that $|X|\geq 2$. By (1) in Lemma \ref{set1}, (4) is obvious.
	Note that if $F$ contains a coloop, then $\sum_{A\subseteq F}(-1)^{|A|}\binom{|A|-rk(A)}{i}=0$ for $i=1,2$.
	Combining (1) in Lemma \ref{set1} with (1) in Lemma \ref{set2}, (5) holds. Combining (2) in Lemma \ref{set1} with (2) in Lemma \ref{set2}, (6) holds.
\end{proof}

The following result is obtained immediately by (5) and (6) in Corollary \ref{maincoro1}.
\begin{corollary}
	Let $M=(X,rk)$ be a loopless matroid. Then
	$$t_{r-2,2}-t_{r-2,1}=\sum_{F \in \mathcal{F}'_2(M)}(p'(F)-1).$$
\end{corollary}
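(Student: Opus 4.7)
The statement is a direct arithmetic consequence of parts (5) and (6) of Corollary \ref{maincoro1}, so the plan is simply to compute the difference $t_{rk(M)-2,2}-t_{rk(M)-2,1}$ term by term.

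First I would write down the formulas from (5) and (6) side by side. Both expressions begin with the identical summand $(1-rk(M))p'(M)+\sum_{F\in \mathcal{F}'_1(M)}p(M/F)$, so when I subtract, these two terms cancel immediately. This is the key structural observation: the contribution coming from rank-$1$ cyclic flats $F$ (via the $p(M/F)$ terms) and the global parallel-class correction $(1-rk(M))p'(M)$ are common to both coefficients and play no role in the difference.

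What remains is the part coming from rank-$2$ cyclic flats. In $t_{rk(M)-2,1}$ this is $|\mathcal{F}'_{2,p(\cdot)=3}(M)|$, while in $t_{rk(M)-2,2}$ it is $|\mathcal{F}'_{2,p(\cdot)=2}(M)|+\sum_{F\in \mathcal{F}'_{2,p(\cdot)=3}(M)}p'(F)$. Subtracting gives
\[
|\mathcal{F}'_{2,p(\cdot)=2}(M)|+\sum_{F\in \mathcal{F}'_{2,p(\cdot)=3}(M)}p'(F)-|\mathcal{F}'_{2,p(\cdot)=3}(M)|,
\]
and absorbing the last count into the sum over $\mathcal{F}'_{2,p(\cdot)=3}(M)$ as $-1$ per summand yields exactly $|\mathcal{F}'_{2,p(\cdot)=2}(M)|+\sum_{F\in \mathcal{F}'_{2,p(\cdot)=3}(M)}(p'(F)-1)$, which is the claim.

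There is no real obstacle here; the content of the result lies entirely in Corollary \ref{maincoro1}, and the corollary is a one-line bookkeeping step. The only thing to double-check is that the sums over $\mathcal{F}'_1(M)$ and the factor $(1-rk(M))p'(M)$ really appear identically in both (5) and (6) (including signs), so that they cancel cleanly; a quick comparison of the two formulas confirms this.
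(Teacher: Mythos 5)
Your proposal is correct and matches the paper's argument exactly: the paper also obtains this identity immediately by subtracting part (5) from part (6) of Corollary \ref{maincoro1}, with the common terms $(1-rk(M))p'(M)+\sum_{F\in \mathcal{F}'_1(M)}p(M/F)$ cancelling and the remaining rank-$2$ contributions combining as you describe.
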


Note that for a matroid $M=(X,rk)$, $F\in \mathcal{F}'_{k}(M)$ if and only if $X\setminus F\in \mathcal{F}'_{|X\setminus F|+k-r}(M^{*})$, where $M^{*}$ denotes the dual matroid of $M$ and $0\leq k\leq r$. Therefore, by Corollary \ref{maincoro1}, we have the following dual result.

\begin{corollary}\label{MainTheT}
	Let $M=(X,rk)$ be a matroid without coloops. Then
	\begin{description}
		\item[(d1)] $t_{0,|X|-r}=1;$
		\item[(d2)] $t_{0,|X|-r-1}=s(M)-|X|+r;$
		\item[(d3)] $t_{0,|X|-r-2}=\binom{|X|-r}{2} -(|X|-r-1)s(M)+\sum_{F\in \mathcal{F}_{2}(M^{*})}(s(M/\overline{F})-1);$
		\item[(d4)]
		$t_{1,|X|-r-1}=s'(M);$
		\item[(d5)]
		$t_{1,|X|-r-2}=(1-|X|+r)s'(M)
		+\sum_{F \in \mathcal{F}'_{k}(M)}s(F)
		+\sum_{F \in \mathcal{F}'_{k+1}(M)}(s(M/F)-2);$
		\item[(d6)]
		$t_{2,|X|-r-2}=(1-|X|+r)s'(M)
		+\sum_{F \in \mathcal{F}'_{k}(M)}s(F)+\sum_{F \in \mathcal{F}'_{k+1}(M)}(s(M/F)+s'(M/F)-3),$
	\end{description}
	where $k=r-|X\setminus F|+1$.
\end{corollary}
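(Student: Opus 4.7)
The strategy is to derive Corollary \ref{MainTheT} as the duality reflection of Corollary \ref{maincoro1} applied to $M^*$. Specializing Lemma \ref{dualTutte} to the trivial multiplicity yields $T_M(x,y) = T_{M^*}(y,x)$, so $t_{i,j}(M) = t_{j,i}(M^*)$ for all $i, j \ge 0$. Since $M$ has no coloops, $M^*$ has no loops, so Corollary \ref{maincoro1} is applicable with $M$ replaced by $M^*$. Each of the six formulas in Corollary \ref{MainTheT}(1)--(6) will then follow by translating the corresponding expression from Corollary \ref{maincoro1} for $M^*$ into invariants of $M$.

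The translation uses a short dictionary. First, the elementary identities $rk(M^*) = |X| - rk(M)$, $p(M^*) = s(M)$, and $p'(M^*) = s'(M)$, where the latter two use that a pair of parallel elements in $M^*$ is exactly a $2$-element cocircuit of $M$, i.e.\ a series pair in $M$. Second, the classical self-dual bijection $F' \mapsto X \setminus F'$ between cyclic flats of $M^*$ and cyclic flats of $M$; combined with the formula $rk^*(F') = |F'| + rk(X \setminus F') - rk(M)$, this shows that $F' \in \mathcal{F}'_r(M^*)$ corresponds to $F = X \setminus F' \in \mathcal{F}'_{rk(M)-|X\setminus F|+r}(M)$, which is precisely the relation $k = rk(M) - |X \setminus F| + 1$ appearing in the statement. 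Third, the standard restriction--contraction duality $(M^*|F')^* = M/(X \setminus F')$ and $(M^*/F')^* = M|(X \setminus F')$, which, on taking the parallel/series correspondence, yields $p(M^*|F') = s(M/F)$ and $p(M^*/F') = s(F)$ with $F = X \setminus F'$.

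With this dictionary, (1), (2), and (4) follow immediately. For (3), the inner summand $p(F') - 1 = p(M^*|F') - 1$ in Corollary \ref{maincoro1}(3) applied to $M^*$ is transformed via the dictionary into $s(M/\overline{F}) - 1$ with $\overline{F} = X \setminus F$, yielding the stated expression. For (5) and (6), the sums over $\mathcal{F}'_1(M^*)$ and $\mathcal{F}'_2(M^*)$ become sums over the complementary families $\mathcal{F}'_k(M)$ and $\mathcal{F}'_{k+1}(M)$; the integrand $p(M^*/F')$ becomes $s(F)$; the subclasses $\mathcal{F}'_{2,\,p(\cdot)=i}(M^*)$ for $i = 2, 3$ become $\mathcal{F}'_{k+1,\,s(\cdot)=i}(M)$; and the last summand in (6) requires additionally $p'(M^*|F') = s'(M/F)$, again by the restriction--contraction duality.

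The main obstacle is pure bookkeeping --- matching the complement, restriction, and contraction operations under duality consistently, and correctly tracking the rank shift when passing from rank-$1$ and rank-$2$ cyclic flats of $M^*$ to their complements in $M$. No new matroid-theoretic input is required beyond what has already been assembled in Sections 2 and 3.
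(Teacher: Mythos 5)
Your proposal is correct and follows essentially the same route as the paper, which likewise derives Corollary \ref{MainTheT} from Corollary \ref{maincoro1} applied to the loopless matroid $M^{*}$, using $t_{i,j}(M)=t_{j,i}(M^{*})$ (the trivial-multiplicity case of Lemma \ref{dualTutte}), the complementation bijection between cyclic flats of $M^{*}$ and of $M$ with the rank shift $k=rk(M)-|X\setminus F|+1$, and the standard dictionary $p(M^{*})=s(M)$, $p(M^{*}/F')=s(F)$, $p(M^{*}|F')=s(M/\overline{F})$. Your explicit dictionary in fact reproduces the form of Theorem \ref{MainAC}(3), i.e.\ with $s(M/\overline{F})$ rather than the $s(M/F)$ printed in item (3) of the corollary, which is evidently a typographical slip in the statement rather than a defect of your argument.
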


We know that a binary matroid does not contain the uniform matroid $U_{2,4}$ as a minor. Then, for a binary matroid $M$ with rank $2$, we have $2\leq p(M)\leq3$. It is easy to see that the results (3), (5), (d3) and (d5) in Corollaries \ref{maincoro1} and \ref{MainTheT} can be further simplified when $M$ is a binary matroid. For a matroid $M$, let $\triangle (M)$ be the number of circuits with three parallel classes, and let $\theta(M)$ be the number of cocircuits with three series classes. Then, for a binary matroid $M$,
\[\sum_{F\in \mathcal{F}_2(M)}(p(F)-1)=\binom{p(M)}{2}-\triangle (M),\]
and
\[\sum_{F \in \mathcal{F}'_2(M)}(p(F)-2)=\triangle (M),\]
and dually,
\[\mathcal{F}_{2}(M^{*})(s(M/\overline{F})-1)=\binom{s(M)}{2}-\theta(M)\]
and
\[\sum_{F \in \mathcal{F}'_{k}(M)}(s(M/F)-2)=\theta(M),\]
where $k=r-|X\setminus F|+2$.

Therefore we have the following specializations on binary matroids.
\begin{corollary}
	Let $M=(X,rk)$ be a loopless binary matroid. Then
	\begin{description}
		\item[(3$^{'}$)] $t_{r-2,0}=\binom{r}{2} -(r-1)p(M)
		+\binom{p(M)}{2}-\triangle (M);$
		\item[(5$^{'}$)] $t_{r-2,1}=(1-r)p'(M)
		+\sum_{F \in \mathcal{F}'_1(M)}p(M/F)
		+\triangle (M).$
	\end{description}
\end{corollary}
\begin{corollary}
	Let $M=(X,rk)$ be a binary matroid without coloops. Then
	\begin{description}
		\item[(d3$^{'}$)] $t_{0,|X|-r-2}=\binom{|X|-r}{2} -(|X|-r-1)s(M)+\binom{s(M)}{2}-\theta(M);$
		\item[(d5$^{'}$)] $t_{1,|X|-r-2}=(1-|X|+r)s'(M)
		+\sum_{F \in \mathcal{F}'_{k}(M)}s(F)
		+\theta (M).$
	\end{description}
\end{corollary}

\section{Concluding remarks}
\noindent

The main aim of this work is to obtain the extreme coefficients of the multiplicity Tutte polynomial. As consequences, the extreme coefficients of the Tutte polynomial for matroids and binary matroids are deduced.
Taking the cycle matroid of a loopless and bridgeless connected graph and its dual matroid, bond matroid, Theorems 1 and 2 in \cite{Gong} can be deduced by the first four results in our Corollaries \ref{maincoro1} and \ref{MainTheT}.

The expressions of $t_{r,0}$, $t_{r-1,0}$, $t_{0,|X|-r}$ and $t_{0,|X|-r-1}$ may be well-known, and as the referee points out that the coefficients $t_{r-1,1}$ and $t_{1,|X|-r-1}$ can also be obtained by using induction and the deletion-contraction formula of the Tutte polynomial.

The expression of $t_{0,|X|-r-2}$ in Corollary \ref{MainTheT} is not satisfactory because it involves the dual matroid. The reason for this trouble is that for $F\in \mathcal{F}(M)$, we can not deduce $X\setminus F\in \mathcal{F}(M^{*})$.

There are two convolution formulas in \cite{Backman}. By defining $m^{*}(A)=m(X\setminus A)$ for any $A\subseteq X$ to introduce the dual of a multiplicity matroid $\mathcal{M}=(X,rk,m)$, we obtain Theorem \ref{MainAC} by duality. An interesting problem is to obtain Theorem \ref{MainAC} independently by using the other convolution formula.

\section*{Acknowledgements}
\noindent

We sincerely thank the anonymous referee for helpful comments on the first version of the paper. This work is supported by National Natural Science Foundation of China
(No. 12171402).

\ \vspace{0.3cm}

\end{document}